\documentclass{article}
\usepackage{graphicx} 
\usepackage{amsthm} 
\usepackage{amssymb}
\newtheorem{theorem}{Theorem}
\newtheorem{corollary}{Corollary}
\newtheorem{lemma}{Lemma}

\newtheorem*{conjecture}{Conjecture}
\usepackage{amsmath}
\usepackage{float}
\usepackage{subfigure}
\usepackage{tikz}
\usepackage{titlesec}
\titlelabel{\thetitle.\quad}
\usepackage{comment}

\title{Isomorphism of Clean Graphs over $\mathbb{Z}_n$ and Structural Insight into $M_2(\mathbb{Z}_p)$}
\author{Felicia Servina Djuang$^{1}$, Indah Emilia Wijayanti$^{2}$, and Yeni Susanti$^{3}$\\
{\small $^{1,2,3}$Department of Mathematics, Universitas Gadjah Mada, Yogyakarta, Indonesia}\\
\small{$^{1}$feliciadjuang25@mail.ugm.ac.id, $^{2}$ind\_wijayanti@ugm.ac.id, $^{3}$yeni\_math@ugm.ac.id}}
\date{}

\begin{document}
\maketitle
\begin{abstract}
Let $R$ be a finite ring with identity. The clean graph $Cl(R)$ of a ring $R$ is a graph whose vertices are pairs $(e, u)$, where $e$ is an idempotent element and $u$ is a unit of $R$. Two distinct vertices $(e, u)$ and $(f, v)$ are adjacent if and only if $ef = fe = 0$ or $uv = vu = 1$. The graph $Cl_2(R)$ is the induced subgraph of $Cl(R)$ induced by the set $\{(e, u): e \text{ is a nonzero idempotent and } u \text{ is a unit of } R\}$. In this study, we present properties that arise from the isomorphism of two clean graphs and conditions under which two clean graphs over direct product rings are isomorphic. We also examine the structure of the clean graph over the ring $M_2(\mathbb{Z}_p)$ through their $Cl_2$ graph. \\
\textbf{Keyword:} clean graph, idempotent graph, isomorphism graph, unit, idempotent.\\
\textbf{2020 AMS Subject Classification:} 05C60, 05C25, 13A70, 16U60, 16U40.
\end{abstract}

\section{Introduction}
The interaction between algebra and graph theory has led to profound deve\-lopments in modern mathematics. This connection was first explored by Beck in \cite{beck}, where the elements of a commutative ring $R$ were represented as vertices of a graph, with a focus on coloring based on zero-divisors. Building on this idea, Anderson and Livingston \cite{anderson} introduced the zero-divisor graph $\Gamma(R)$, whose vertices are the nonzero zero divisors of $R$, with two distinct vertices $x$ and $y$ adjacent if and only if $xy = yx = 0$. Since then, a variety of algebraic graphs have been proposed to capture the intricate relationship between ring-theoretic properties and combinatorial structures \cite{wilson}. Notable examples include the unit graph over $\mathbb{Z}_n$, introduced by Grimaldi \cite{grimaldi}, where two vertices $x$ and $y$ are adjacent if $x + y$ is a unit in $\mathbb{Z}_n$, and the idempotent graph introduced by Akbari et al. \cite{akbari}, whose vertices are the nontrivial idempotent elements of a ring $R$, with adjacency defined by $xy = yx = 0$.

These constructions not only deepen our understanding of algebraic objects but also serve as bridges to applications in diverse areas such as coding theory \cite{fish,jain}, cryptography, and network theory. Within this expanding landscape, the search for new algebraic graphs that reveal hidden structural insights has become both natural and necessary.

One such construction is the clean graph of a ring, introduced by Habibi et al. \cite{habibiyet}. This graph is motivated by the notion of clean rings \cite{nicholzhou,immormino}, where each element decomposes into an idempotent and a unit. The clean graph over the ring $R$, denoted by $Cl(R)$, has as vertices pairs of an idempotent and a unit from the ring $R$, and two vertices $(e,u)$ and $(f,v)$ are adjacent if either $ef=fe=0$ or $uv=vu=1$. Furthermore, $Cl_1(R)$ and $Cl_2(R)$ are induced subgraphs of $Cl(R)$ that are induced by $\{(0,u): u \text{ is unit in } R\}$ and $\{(e,u): 0 \neq e \text{ is idempotent and } u \text{ is unit in } R\}$ respectively.
We begin by reviewing some fundamental concepts that will be used throughout this paper. Let $R$ be a ring with identity. An element $e \in R$ is called an idempotent if $e^2 = e$, while an element $u \in R$ is called a unit if there exists $v \in R$ such that $uv = vu = 1$. We denote the set of all idempotents in $R$ by $Id(R)$, and the set of all units by $U(R)$. Moreover, the set $U(R)$ can be divided into two disjoint subsets: $U'(R)=\{u \in U(R): u^2=1\} \text{ and } U''(R) =U(R) \setminus U'(R).$
For any other ring-theoretic notation or background on clean rings, we refer the reader to \cite{malikmor,nicholzhou}.

Let $G = (V(G), E(G))$ be a graph, where $V(G)$ and $E(G)$ denote its vertex and edge sets, respectively. The degree of a vertex $v \in V(G)$, written as $\deg_G(v)$, is the number of edges in $G$ incident with $v$. Two graphs $G_1$ and $G_2$ are said to be isomorphic, written $G_1 \cong G_2$, if there exists a bijection $f: V(G_1) \to V(G_2)$ such that for every pair of vertices $u, v \in V(G_1)$, the vertices $u$ and $v$ are adjacent in $G_1$ if and only if $f(u)$ and $f(v)$ are adjacent in $G_2$.

Another graph operation relevant to our discussion is the shuriken graph, introduced in \cite{djuang}. Let $n,t$ be positive integers such that $n-t$ is even. The $(t,n)$-shuriken graph of $G$, denoted $Shu^t_n(G)$, is constructed as follows: first add a new vertex $z$ to $G$, then take $n$ copies of the resulting graph. Denote by $G'_i$ the $i$-th copy of this graph for $1 \leq i \leq n$. The vertex and edge sets of $Shu^t_n(G)$ are then defined by
$$V(Shu^t_n(G))=\bigcup_{i=1}^n \{z_i, v_i: v \in V(G)\} \text{ and}$$
    \begin{align*}
        E(Shu^t_n(G))= &\{u_iv_j : uv \in E(G), i,j \in \{1,2,\dots,n\}\} \\
        &\cup \{u_iv_i: u_i,v_i \in V(G'_i), u_i\neq v_i, i \in\{1,2,\dots,t\}\}\\ 
        &\cup \Bigg\{u_iv_{n+t+1-i}: u_i \in V(G'_i), v_{n+t+1-i} \in V(G'_{n+t+1-i}) \\
        & \hspace{2.5 cm}i \in \left\{t+1,t+2,\dots,\frac{n+t}{2}\right\} \Bigg\}.\end{align*}
For general background on graph theory and terminology, see \cite{wilson}.

In \cite{djuang}, it was shown that for any ring $R$ with identity, $Cl_2(R) \cong Shu^t_k(I(R))$ where $t = |U'(R)|$ and $k = |U(R)|$. As a consequence, the structure of the clean graph $Cl_2$ over $\mathbb{Z}_n$ was determined. 

A particularly pressing problem is the classification and isomorphism of clean graphs. Isomorphism results in algebraic graph theory are far from simple technicalities; they provide deep insight into how ring-theoretic similarities manifest in graph structures. For example, two nonisomorphic rings may give rise to isomorphic algebraic graphs, or conversely, graph-theoretic distinctions may reveal subtle algebraic differences. In practical contexts, graph isomorphism is related to recognition problems in discrete mathematics, efficient coding constructions, and structural equivalences in networks \cite{fish,jain}. Despite their centrality in the study of other algebraic graphs, such results for clean graphs remain scarce. Further research on clean graph isomorphisms was conducted by \cite{pongthana}, who showed that for any prime numbers $p, q$ and any integer $r$ with $\gcd(p,r) = \gcd(q,r) = 1$, $Cl_2(\mathbb{Z}_{p^nr}) \cong Cl_2(\mathbb{Z}_{q^mr}) \iff p^n-p^{n-1} = q^m-q^{m-1}$. Without a systematic understanding of their isomorphism classes, the development of clean graph theory will remain incomplete.

Another dimension of urgency arises from matrix rings, particularly $M_2(\mathbb{Z}_p)$. These rings exhibit highly nontrivial unit groups and nontrivial idempotent elements, making them a natural laboratory for exploring the full complexity of clean graphs. Although zero-divisor and idempotent graphs of matrix rings have been examined \cite{patil}, the clean graph of matrix rings has barely been touched upon. The subgraph $Cl_2(R)$, induced by nonzero idempotents, provides a more refined tool to probe these structures. Studying $Cl_2(M_2(\mathbb{Z}_p))$ not only addresses a gap in the literature but also reveals interactions between matrix idempotents and units that previously studied graphs cannot capture.

By establishing conditions under which clean graphs over direct product rings are isomorphic, and by analyzing the structure of the clean graph over $M_2(\mathbb{Z}_p)$ through its $Cl_2$ subgraph, we contribute to the foundational development of clean graph theory. Our work not only extends existing results but also introduces new perspectives on the role of idempotents and units in algebraic graph theory. In doing so, it opens the door for broader applications of clean graphs, both in the classification of algebraic structures and in potential applied domains such as cryptography, network design, and error-correcting codes.
    
\section{Isomorphism of Clean Graphs}

For any isomorphic rings, the relationship between the clean graphs over these rings is considered and presented in the following lemma.
\begin{lemma}
    Given rings $R_1$ and $R_2$ with identity elements and satisfying $R_1 \cong R_2$, it holds that
    $$Cl(R_1) \cong Cl(R_2) \text{ and } Cl(nR_1) \cong Cl(nR_2), \text{ for all } n \in \mathbb{N},$$
    where $nR=\underbrace{R \times R \times \dots \times R}_{n \text{ times}}$.
\end{lemma}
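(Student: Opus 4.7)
\vspace{1em}

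\noindent\textbf{Proof proposal.} The plan is to lift a given ring isomorphism to a graph isomorphism in the most direct way possible: a bijection of rings that preserves multiplication and the identity automatically preserves idempotents and units, and hence preserves the data that define adjacency in the clean graph.

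First I would fix a ring isomorphism $\varphi : R_1 \to R_2$ and verify the two elementary facts: if $e^2 = e$ in $R_1$ then $\varphi(e)^2 = \varphi(e^2) = \varphi(e)$ in $R_2$, and if $uv = vu = 1_{R_1}$ then $\varphi(u)\varphi(v) = \varphi(v)\varphi(u) = \varphi(1_{R_1}) = 1_{R_2}$. Moreover $\varphi$ restricts to bijections $\mathrm{Id}(R_1) \to \mathrm{Id}(R_2)$ and $U(R_1) \to U(R_2)$, with inverse induced by $\varphi^{-1}$. Next I would define
\[
\Phi : V(Cl(R_1)) \to V(Cl(R_2)), \qquad \Phi(e, u) = (\varphi(e), \varphi(u)),
\]
which is a bijection of vertex sets by the previous step. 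Checking the adjacency condition is now routine: $(e,u)$ and $(f,v)$ are adjacent in $Cl(R_1)$ iff $ef = fe = 0$ or $uv = vu = 1$; applying $\varphi$ (which preserves products, zero, and the identity) turns these equations into the corresponding equations for $\Phi(e,u)$ and $\Phi(f,v)$ in $Cl(R_2)$, and since $\varphi$ is injective the implication reverses via $\varphi^{-1}$. Thus $\Phi$ is a graph isomorphism and $Cl(R_1) \cong Cl(R_2)$.

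For the second claim I would note that a ring isomorphism $\varphi : R_1 \to R_2$ induces a ring isomorphism $\varphi^{(n)} : nR_1 \to nR_2$ by acting on each coordinate, namely $(x_1, \dots, x_n) \mapsto (\varphi(x_1), \dots, \varphi(x_n))$. Since $nR_1 \cong nR_2$ as rings with identity, applying the first part with $R_1, R_2$ replaced by $nR_1, nR_2$ yields $Cl(nR_1) \cong Cl(nR_2)$.

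I do not anticipate a genuine obstacle here; the only care needed is to record that $\varphi$ sends $0$ to $0$ and $1$ to $1$ (hence preserves both adjacency conditions and the respective element classes) and that the construction is functorial enough to pass through direct products coordinatewise. Everything else is bookkeeping.
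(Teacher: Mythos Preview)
Your argument is correct and follows essentially the same route as the paper: both lift the ring isomorphism to the obvious vertex bijection $(e,u)\mapsto(\varphi(e),\varphi(u))$, check that multiplicativity of $\varphi$ preserves the two adjacency conditions, and then reduce the $nR$ statement to the first part via the coordinatewise isomorphism $nR_1\cong nR_2$. If anything, your write-up is slightly more explicit about why $\varphi$ restricts to bijections on idempotents and units, which the paper takes for granted.
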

\begin{proof}
Since $R_1 \cong R_2$, there exist bijective functions $f_1: Id(R_1) \to Id(R_2)$ and $f_2: U(R_1) \to U(R_2)$. Since $$V(Cl(R_1))=Id(R_1) \times U(R_1) \text{ and } V(Cl(R_2))=Id(R_2) \times U(R_2),$$ a graph isomorphism can be defined as $$g: V(Cl(R_1)) \to V(Cl(R_2)) \text{ with } g((e,u))=(f_1(e),f_2(u))$$ for each $(e,u) \in V(Cl(R_1))$. \\
Let $(e_1,u_1),(e_2,u_2) \in V(Cl(R_1))$ be arbitrary. Observe that
\begin{align*}
    (e_1,u_1)(e_2,u_2) \in E(Cl(R_1)) &\iff e_1e_2=0 \text{ atau } u_1u_2=1\\
    &\iff f_1(e_1e_2)=0 \text{ atau } f_2(u_1,u_2)=1\\
    &\iff f_1(e_1)f_1(e_2)=0 \text{ atau } f_2(u_1)f_2(u_2)=1\\
    &\iff (f_1(e_1),f_2(u_1))(f_1(e_2),f_2(u_2)) \in E(Cl(R_2))\\
    &\iff g((e_1,u_1))g((e_2,u_2)) \in E(Cl(R_2)).
\end{align*}   
For any $n\in \mathbb{N}$, we know that $nR_1 \cong nR_2$. Thus,
$$Cl(R_1) \cong Cl(R_2) \text{ and } Cl(nR_1) \cong Cl(nR_2), \text{ for all } n \in \mathbb{N}.$$
\end{proof}

Consider the clean graph for the rings $\mathbb{Z}_3$ and $\mathbb{Z}_4$, as well as the clean graph for the rings $\mathbb{Z}_7$ and $\mathbb{Z}_9$. We have
\begin{align*}
    Cl_2(\mathbb{Z}_3) \cong Cl_2(\mathbb{Z}_4) = 2 K_1, \text{ and }\\
    Cl_2(\mathbb{Z}_7) \cong Cl_2(\mathbb{Z}_9) = 2 K_1 \cup 2 K_2.
\end{align*}
Based on the examples above, there exist rings $R$ and $S$ such that $Cl_2(R) \cong Cl_2(S)$, but $R \ncong S$. For the rings $\mathbb{Z}_n$ and $\mathbb{Z}_m$ with natural numbers $n,m$, it is known that $\mathbb{Z}_n \cong \mathbb{Z}_m$ if and only if $n=m$, which implies that $|\mathbb{Z}_n|=|\mathbb{Z}_m|$. Thus, trivially, for any modular integer rings $R_1, R_2$, $Cl_2(R_1) \cong Cl_2(R_2)$ and $|R_1|=|R_2|$ if and only if $R_1 \cong R_2$. 

However, this is not true for every ring $R_1$ and ring $R_2$, because there exist rings $\mathbb{Z}_4$ and $\mathbb{Z}_2[x]/\langle x^2 \rangle$ such that $|\mathbb{Z}_4|=4=|\mathbb{Z}_2[x]/\langle x^2 \rangle|$ and $Cl_2(\mathbb{Z}_4) \cong Cl_2(\mathbb{Z}_2[x]/\langle x^2 \rangle)$, but $\mathbb{Z}_4 \ncong \mathbb{Z}_2[x]/\langle x^2 \rangle$.

Furthermore, consider that
$$Cl_2\left(\mathbb{Z}_3 \times \mathbb{Z}_3\right) \cong Cl_2\left(\mathbb{Z}_3 \times \mathbb{Z}_4\right) \cong Cl_2\left(\mathbb{Z}_4 \times \mathbb{Z}_4\right) \cong Cl_2(\mathbb{Z}_{12}).$$ In this case, it will be proven that for any rings $\mathbb{Z}_{p^n}$ and $\mathbb{Z}_{q^m}$ with $Cl_2(\mathbb{Z}_{p^n}) \cong Cl_2(\mathbb{Z}_{q^m})$, it holds that $Cl_2(\mathbb{Z}_{p^n} \times \mathbb{Z}_k) \cong Cl_2(\mathbb{Z}_{q^m} \times \mathbb{Z}_k)$ for some prime numbers $p,q$ and natural numbers $n,m,k$. Before that, we first investigate the relationship between the isomorphism of the clean graphs $Cl(R)$ and $Cl(S)$ and the isomorphism of their corresponding graphs $Cl_2(R)$ and $Cl_2(S)$. We show that these two notions of isomorphism are equivalent, and we further discuss additional properties that arise from this equivalence.

\begin{theorem}\label{isomorClCl2}
    Given finite rings with identity element $R$ and $S$, the graph $Cl(R) \cong Cl(S)$ if and only if $Cl_2(R) \cong Cl_2(S)$.
\end{theorem}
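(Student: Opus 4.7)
The plan is to exploit the structural observation that every vertex $(0, u) \in V(Cl_1(R))$ is adjacent to every other vertex of $Cl(R)$, since $0 \cdot f = f \cdot 0 = 0$ for any idempotent $f$. Hence $V(Cl(R)) = V(Cl_1(R)) \sqcup V(Cl_2(R))$, with the induced subgraph on $V(Cl_1(R))$ being the clique $K_{|U(R)|}$, the induced subgraph on $V(Cl_2(R))$ being $Cl_2(R)$, and every vertex of $V(Cl_1(R))$ universal in $Cl(R)$. Once $|U(R)| = |U(S)|$ is established, both directions follow by assembling or restricting the appropriate bijections.

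For the direction $(\Leftarrow)$, I assume $\psi : Cl_2(R) \to Cl_2(S)$ is an isomorphism and first recover $|U(R)|$ as a graph invariant of $Cl_2(R)$. A direct count shows that $(e,u) \in V(Cl_2(R))$ has degree $a(e)(|U(R)|-1) + (|Id(R)|-1) - \varepsilon$ in $Cl_2(R)$, where $a(e) = |\{f \in Id(R) \setminus \{0\} : ef = 0\}|$ and $\varepsilon = 1$ if $u^2 = 1$ and $0$ otherwise. Since $a(1) = 0$, the vertex $(1,1)$ attains the minimum degree $|Id(R)| - 2$. The analysis then splits into two regimes: if $|Id(R)| = 2$, then every vertex has the form $(1,u)$ and $Cl_2(R)$ decomposes as $|U'(R)|$ isolated vertices together with $(|U(R)| - |U'(R)|)/2$ independent edges, so $|V(Cl_2(R))| = |U(R)|$; if $|Id(R)| \geq 3$, then no vertex is isolated, the minimum degree equals $|Id(R)| - 2$, and $|U(R)| = |V(Cl_2(R))| / (|Id(R)| - 1)$. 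In each regime the vertex count, the presence of isolated vertices, and the minimum degree are preserved by $\psi$, forcing $|U(R)| = |U(S)|$. I would then combine $\psi$ with any bijection $\pi : V(Cl_1(R)) \to V(Cl_1(S))$ into a single map $\varphi : V(Cl(R)) \to V(Cl(S))$; the adjacencies inside $Cl_2$ come from $\psi$, while adjacencies touching $Cl_1$ are automatic by universality on both sides.

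For the direction $(\Rightarrow)$, I assume $\varphi : Cl(R) \to Cl(S)$ is an isomorphism. If $|U(R)| = 1$, then any two distinct vertices are adjacent via the unit condition $1 \cdot 1 = 1$, so $Cl(R)$ is complete; the same then holds for $Cl(S)$, forcing $|U(S)| = 1$, and both $Cl_2(R)$ and $Cl_2(S)$ are complete graphs of order $|V(Cl(R))| - 1$. If $|U(R)| \geq 2$, I claim the universal vertices of $Cl(R)$ are precisely those of $Cl_1(R)$: for any $(e, u) \in V(Cl_2(R))$, pick $w \in U(R) \setminus \{u^{-1}\}$ (possible since $|U(R)| \geq 2$); then $(e, w) \neq (e, u)$, $e \cdot e \neq 0$, and $uw \neq 1$ together provide a non-neighbor of $(e, u)$. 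Consequently $\varphi$ must send $V(Cl_1(R))$ bijectively onto $V(Cl_1(S))$, and its restriction to $V(Cl_2(R))$ yields the desired isomorphism $Cl_2(R) \to Cl_2(S)$. The main obstacle in the whole argument is the extraction of $|U(R)|$ as a graph invariant of $Cl_2(R)$: this requires the case split between the matching-like regime $|Id(R)| = 2$ and the regime $|Id(R)| \geq 3$, together with the verification that $(1,1)$ realizes the minimum degree in the latter.
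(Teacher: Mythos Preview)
Your approach is essentially the same as the paper's: both exploit the join decomposition $Cl(R)=Cl_1(R)+Cl_2(R)$, use the vertex $(1,1)$ (with $a(1)=0$, $\varepsilon=1$) as the minimum-degree witness to recover $|Id(R)|$ and hence $|U(R)|$ from $Cl_2(R)$, and for the forward direction identify $V(Cl_1(R))$ as exactly the set of universal vertices of $Cl(R)$ when $|U(R)|\ge 2$. Your write-up is organized as a direct invariant extraction, whereas the paper phrases both directions by contradiction, but the underlying ideas coincide.

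One small point to tighten in your $(\Rightarrow)$ argument: choosing $w\in U(R)\setminus\{u^{-1}\}$ does not by itself guarantee $w\neq u$, so your claim $(e,w)\neq(e,u)$ is not yet justified. The fix is immediate: if $u^2=1$ then $u^{-1}=u$ and any $w\neq u$ works; if $u^2\neq 1$ then $1,u,u^{-1}$ are three distinct units, so $|U(R)|\ge 3$ and you may take $w\notin\{u,u^{-1}\}$. With this adjustment your non-neighbor $(e,w)$ is a genuine distinct vertex and the universality argument goes through.
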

\begin{proof}
    Suppose $Cl_2(R) \cong Cl_2(S)$. Since $Cl(R)=Cl_1(R)+Cl_2(R)$ and $Cl(S)=Cl_1(S)+Cl_2(S)$, assume for contradiction that $Cl(R) \ncong Cl(S)$, then it must be that $Cl_1(R) \ncong Cl_1(S)$. Since $Cl_1(R)$ and $Cl_1(S)$ are complete graphs, we obtain $K_{|U(R)|} \neq K_{|U(S)|}$. Consequently, $|U(R)| \neq |U(S)|$. Since $|V(Cl_2(R))|=|V(Cl_2(S))|$, we have
    \begin{align*}
        &|Id(R) \setminus \{0\} \times U(R)| = |Id(S) \setminus \{0\} \times U(S)|\\ \iff &|Id(R) \setminus \{0\}| |U(R)| = |Id(S) \setminus \{0\}| |U(S)|. 
    \end{align*} 
    Thus, it must be that $|Id(R) \setminus \{0\}| \neq |Id(S) \setminus \{0\}|$. Suppose 
    \begin{align*}
        |Id(R) \setminus \{0\}|=n, \quad 
        |Id(S) \setminus \{0\}|=m, \quad |U(R)|=k, \quad |U(S)|=l
    \end{align*}
    where $(n,m),(k,l)$ are distinct pairs of natural numbers. We consider the following two cases:
    \begin{enumerate}
        \item Case $n < m$: Similarly, the vertex $(1,1)$ in $Cl_2(R)$ has degree $n-1$. On the other hand, for every vertex $(e,u) \in V(Cl_2(S))$, we have
        $$deg_{Cl_2(S)}(e,u)=\begin{cases}
        m-1 + O_e \left( l-1 \right), &\text{if } u \in U'(R),\\ 
        m + O_e \left( l-1 \right), &\text{if } u \in U''(R). 
    \end{cases}$$ Since $O_e \geq 0$, we obtain $deg_{Cl_2(S)}(e,u) > deg_{Cl_2(R)}(1,1)$. This contradicts the fact that $Cl_2(R) \cong Cl_2(S)$.

        \item Case $n > m$: Similarly, the vertex $(1,1)$ in $Cl_2(S)$ has degree $m-1$. On the other hand, for every vertex $(e,u) \in V(Cl_2(R))$, we have
        $$deg_{Cl_2(R)}(e,u)=\begin{cases}
        n-1 + O_e \left( k-1 \right), &\text{if } u \in U'(R),\\ 
        n + O_e \left( k-1 \right), &\text{if } u \in U''(R). 
    \end{cases}$$ Since $O_e \geq 0$, we obtain $deg_{Cl_2(R)}(e,u) > deg_{Cl_2(S)}(1,1)$. This contradicts the fact that $Cl_2(R) \cong Cl_2(S)$.
    \end{enumerate}
    Since both cases lead to contradictions, it follows that $Cl(R) \cong Cl(S)$. On the other hand, if $Cl(R) \cong Cl(S)$ and $Cl_2(R) \ncong Cl_2(S)$, it must be that $Cl_1(R) \ncong Cl_1(S)$. Consequently, $|Id(R)| \neq |Id(S)|$ and $|U(R)|\neq |U(S)|$. Without loss of generality, assume $|U(R)|>|U(S)|$. Suppose $|U(S)|=1$, we get $Cl(S) = K_{|Id(S)|}$. Although, there is $1 \neq v \in U(R)$ such that $(1,1)$ and $(1,v)$ are not adjacent. It is a contradiction. Thus, $|U(R)|>|U(S)|>1$. It means that there exists a vertex $x=(e,u)$ in $Cl_2(S)$ such that $x$ is adjacent to all vertices in $Cl(S)$. Thus $(e,u)$ and $(1,1)$ are adjacent. Consequently, $e=0$ or $u=1$. Suppose $u=1$, there is $1 \neq w \in U(S)$ such that $(1,u)$ and $(e,u)$ are not adjacent. Hence, it should be $e=0$. Inconsistent with the fact that $x \in Cl_2(S)$. Therefore, it must be $|U(R)| =  |U(S)|$. Furthermore, $|Id(R)|=|Id(S)|$. Clearly that $Cl_2(R) \cong Cl_2(S)$.
\end{proof}

\begin{corollary}\label{equalisoIdU}
    Given finite rings with identity element $R$ and $S$ with $Cl_2(R) \cong Cl_2(S)$, the following statements hold:
    \begin{enumerate}
        \item $|Id(R)|=|Id(S)|$
        \item $|U(R)|=|U(S)|$.
    \end{enumerate}
\end{corollary}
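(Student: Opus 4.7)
The plan is to distill both cardinality equalities from the machinery already deployed in the proof of Theorem~\ref{isomorClCl2}. First I would establish part (2): the forward direction of that theorem's proof showed that the assumption $|U(R)| \neq |U(S)|$ forces a vertex in one $Cl_2$ whose degree strictly exceeds $\deg_{Cl_2}(1,1)$ in the other, contradicting the hypothesized isomorphism. Reproducing (or directly citing) that degree comparison immediately yields $|U(R)| = |U(S)|$.

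For part (1), I would then use the fact that any graph isomorphism preserves vertex count. Since
$$V(Cl_2(R)) = (Id(R) \setminus \{0\}) \times U(R) \quad \text{and} \quad V(Cl_2(S)) = (Id(S) \setminus \{0\}) \times U(S),$$
the isomorphism forces $(|Id(R)|-1)\,|U(R)| = (|Id(S)|-1)\,|U(S)|$. Dividing by the common nonzero value $|U(R)| = |U(S)|$ yields $|Id(R)| - 1 = |Id(S)| - 1$, hence $|Id(R)| = |Id(S)|$.

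The main obstacle is essentially expository rather than mathematical: the proof of Theorem~\ref{isomorClCl2} has already carried out the substantive degree analysis, so the care needed is to extract part (2) cleanly without repeating the full case split on $n < m$ versus $n > m$. An alternative route would be to invoke Theorem~\ref{isomorClCl2} to pass to $Cl(R) \cong Cl(S)$ and then recover $|U(R)|$ by identifying $Cl_1$ as the distinguished complete subgraph of $Cl$, but this requires additional structural justification that the direct degree approach bypasses.
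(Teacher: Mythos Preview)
Your proposal is correct and matches the paper's approach: the corollary is stated without proof precisely because both equalities are already established inside the proof of Theorem~\ref{isomorClCl2}, and you are right to simply extract them. One small observation: the degree comparison in that proof most directly contradicts $|Id(R)\setminus\{0\}| \neq |Id(S)\setminus\{0\}|$ (i.e., it yields part (1) first), with part (2) then following from the vertex-count identity---so your ordering inverts the natural flow, but the argument is valid either way.
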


\begin{lemma}\label{equalisoU'}
    Given finite rings with identity element $R$ and $S$ with $|U(R)| > 1$, such that $Cl_2(R) \cong Cl_2(S)$. Let $f: V(Cl_2(R)) \to V(Cl_2(S))$ be the graph isomorphism function. For any $(a,c) \in V(Cl_2(R))$, let $f(a,c)=(b,d)$, the following hold:
    \begin{align*}
        O_a=O_b \text{ and } (c,d) \in (U'(R) \times U'(S)) \cup (U''(R) \times U''(S)).
    \end{align*}
    Furthermore, $|U'(R)|=|U'(S)|$.
\end{lemma}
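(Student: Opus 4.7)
The plan is to read off the two invariants $O_a$ and the $U'/U''$-type of the second coordinate directly from the vertex degree, which is preserved by the isomorphism $f$. By Corollary \ref{equalisoIdU}, set $n := |Id(R)\setminus\{0\}| = |Id(S)\setminus\{0\}|$ and $k := |U(R)| = |U(S)|$, so the degree formula computed inside the proof of Theorem \ref{isomorClCl2} takes the same shape on both sides:
$$\deg_{Cl_2(R)}(a,c) = \begin{cases} n-1 + O_a(k-1) & \text{if } c \in U'(R),\\ n + O_a(k-1) & \text{if } c \in U''(R),\end{cases}$$
and analogously for $(b,d)$ in $Cl_2(S)$. Since $f$ is a graph isomorphism, $\deg_{Cl_2(R)}(a,c) = \deg_{Cl_2(S)}(b,d)$.

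Next I would split into cases according to which of $U'$ or $U''$ contains $c$ and $d$. When $c$ and $d$ lie in the same type (both in $U'$, or both in $U''$), the ``constant'' part $n-1$ or $n$ cancels from both sides, leaving $O_a(k-1) = O_b(k-1)$; the hypothesis $|U(R)| > 1$, i.e. $k \geq 2$, then forces $O_a = O_b$. The delicate case, and the main obstacle, is the mixed case where, say, $c \in U'(R)$ but $d \in U''(S)$: there the equality of degrees rearranges to $(O_b - O_a)(k-1) = -1$ (or $+1$ in the opposite mixed case). I would dispose of this by splitting on $k$: if $k > 2$ then $k-1 \geq 2$ and the equation has no integer solution, a contradiction; if $k = 2$ then $U(R)$ is a group of order $2$, hence every unit satisfies $u^2 = 1$, giving $U''(R) = \emptyset = U''(S)$, so the mixed case is vacuous. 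Thus under the assumption $|U(R)| > 1$ the mixed case never occurs, and we obtain both $O_a = O_b$ and $(c,d) \in (U'(R)\times U'(S)) \cup (U''(R)\times U''(S))$.

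Finally, having established that $f$ preserves the $U'/U''$-type on the second coordinate, $f$ restricts to a bijection between
$$\{(a,c) \in V(Cl_2(R)) : c \in U'(R)\} \text{ and } \{(b,d) \in V(Cl_2(S)) : d \in U'(S)\}.$$
These sets have cardinalities $|Id(R)\setminus\{0\}| \cdot |U'(R)|$ and $|Id(S)\setminus\{0\}| \cdot |U'(S)|$ respectively. Since Corollary \ref{equalisoIdU} gives $|Id(R)\setminus\{0\}| = |Id(S)\setminus\{0\}|$ and this common value is nonzero (as $1$ is always a nonzero idempotent), cancellation yields $|U'(R)| = |U'(S)|$, completing the proof.
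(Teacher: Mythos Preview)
Your proof is correct and follows essentially the same approach as the paper: equate degrees via the formula $\deg(a,c) = n - 1 + O_a(k-1)$ or $n + O_a(k-1)$, rule out the mixed $U'/U''$ case by the integrality obstruction (with the $k=2$ case handled by noting $U'' = \emptyset$), and conclude $O_a = O_b$. For the final equality $|U'(R)| = |U'(S)|$, the paper instead restricts attention to vertices of the form $(1,x)$, shows via $O_y = 0$ that these map under $f$ to vertices $(1,z)$, and counts there; your global count over all vertices with second coordinate in $U'$ is a slightly more direct variant of the same idea.
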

\begin{proof}
   Let $$|Id(R)|=|Id(S)|=n+1 \text{ and } |U(R)|=|U(S)|=k>1.$$ For any $(a,c) \in V(Cl_2(R))$, let $f(a,c)=(b,d)$. Using the degree formula in \cite{djuang}, there are four possible cases:
    \begin{enumerate}
        \item Case $c \in U'(R)$ and $d \in U''(S)$. We get $O_a(k-1)=O_b(k-1)+1 \iff O_a=O_b+ \frac{1}{k-1}$. Since $O_a,O_b \in \mathbb{Z}^+$, it must be $k=2$. However, since $1 \in U'(S)$ and $d \in U''(S)$, it follows that $d \neq 1$ and there exists $d' \notin {1,d}$ such that $dd' = 1$. Hence, $k = 2$ is impossible.
        \item Case $c \in U''(R)$ and $d \in U'(S)$. We get $O_a(k-1)=O_b(k-1)-1 \iff O_a=O_b- \frac{1}{k-1}$. Since $O_a,O_b \in \mathbb{Z}^+$, it must be $k=2$. However, since $1 \in U'(R)$ and $c \in U''(R)$, it follows that $c \neq 1$ and there exists $c' \notin {1,c}$ such that $cc' = 1$. Thus, it is impossible for $k = 2$.
        \item Case $(c,d) \in (U'(R) \times U'(S)) \cup (U''(R) \times U''(S))$. We get $O_a(k-1)=O_b(k-1)$. Since $k>1$, it follows that $O_a=O_b$.
    \end{enumerate}
    Furthermore, for any $(1,x) \in V(Cl_2(R))$, let $f(1,x)=(y,z)$. We know that $0=O_1=O_y$, which implies that $y$ is not a zero divisor. Since $R$ is a finite ring, we conclude that $y$ is a unit. It follows that $y^2=y$ and there exists $y' \in S$ such that $yy'=1$. Thus,
    $$y=y(yy')=(yy)y'=y^2y'=yy'=1.$$ 
    On the other hand, $(x,z) \in (U'(R) \times U'(S)) \cup (U''(R) \times U''(S))$. Since $f$ is a bijective function and $|U(R)|=|U(S)|$, we have $|U'(R)|=|U'(S)|$.
\end{proof}

We now examine the conditions under which two clean graphs over $\mathbb{Z}_n$ are isomorphic, focusing on the case where $n$ is a natural number with a single prime factor.
\begin{lemma}\label{lemaisoZpnZqm}
    Let $p,q$ be distinct prime numbers and $n,m \in \mathbb{N}$. Then 
    \begin{align*}
        Cl_2(\mathbb{Z}_{p^n}) \cong Cl_2(\mathbb{Z}_{q^m}) \iff &(\{p^n,q^m\}=\{2^2,3^1\}) \vee \\
        &(p,q \neq 2 \wedge p^n-p^{n-1} = q^m - q^{m-1}).
    \end{align*}
\end{lemma}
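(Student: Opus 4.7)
The plan is to exploit the locality of $\mathbb{Z}_{p^n}$ to obtain an explicit description of $Cl_2(\mathbb{Z}_{p^n})$. Since $\mathbb{Z}_{p^n}$ is local, $\mathrm{Id}(\mathbb{Z}_{p^n}) = \{0,1\}$, so the vertex set of $Cl_2(\mathbb{Z}_{p^n})$ is exactly $\{(1,u) : u \in U(\mathbb{Z}_{p^n})\}$. Two distinct vertices $(1,u)$ and $(1,v)$ are adjacent iff $1 \cdot 1 = 0$ or $uv = 1$; only the latter can hold, so adjacency is determined solely by multiplicative inversion. Splitting the units according to whether they are involutions, I would conclude
$$Cl_2(\mathbb{Z}_{p^n}) \;\cong\; |U'(\mathbb{Z}_{p^n})|\, K_1 \;\cup\; \tfrac{1}{2}|U''(\mathbb{Z}_{p^n})|\, K_2.$$
An isomorphism of two such graphs is therefore equivalent to the pair of numerical equalities $|U'(\mathbb{Z}_{p^n})| = |U'(\mathbb{Z}_{q^m})|$ and $|U''(\mathbb{Z}_{p^n})| = |U''(\mathbb{Z}_{q^m})|$; because $|U''| = \phi(p^n) - |U'|$, the second equality is equivalent, given the first, to $p^n - p^{n-1} = q^m - q^{m-1}$.

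Next I would compute $|U'(\mathbb{Z}_{p^n})|$ from the standard structure of $U(\mathbb{Z}_{p^n})$. For odd $p$, the unit group is cyclic of even order $p^n - p^{n-1}$, so its $2$-torsion is $\{\pm 1\}$ and $|U'| = 2$. For $p = 2$ one has $|U'(\mathbb{Z}_2)| = 1$, $|U'(\mathbb{Z}_4)| = 2$, and $|U'(\mathbb{Z}_{2^n})| = 4$ for $n \geq 3$ via $U(\mathbb{Z}_{2^n}) \cong \mathbb{Z}_2 \times \mathbb{Z}_{2^{n-2}}$.

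The proof then splits into cases according to the parities of $p, q$. If both are odd, then $|U'| = 2$ on both sides, and the only remaining condition is $p^n - p^{n-1} = q^m - q^{m-1}$. If exactly one of them is $2$, say $p = 2$, then the equality $|U'(\mathbb{Z}_{2^n})| = 2$ is forced by the odd-prime side, which singles out $n = 2$ (and rules out $n = 1$ and $n \geq 3$); matching $|U''|$ then forces $\phi(q^m) = 2$, i.e., $q^m = 3$. The case $p = q = 2$ is excluded by the distinct-primes hypothesis. Collecting the cases gives exactly the disjunction in the statement.

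The main obstacle is keeping the $p = 2$ branching straight: the three subcases $n = 1$, $n = 2$, and $n \geq 3$ yield different values of $|U'|$, and it is precisely the $n = 2$ coincidence that produces the sporadic exception $\{p^n,q^m\} = \{4,3\}$. Once the reduction to the two invariants $|U'|$ and $\phi$ is in place, the remainder of the argument is a routine finite-case check.
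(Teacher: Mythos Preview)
Your proof is correct and follows essentially the same line as the paper's: both reduce $Cl_2(\mathbb{Z}_{p^n})$ to the explicit form $|U'|\,K_1 \cup \tfrac{1}{2}|U''|\,K_2$ and then match the two numerical invariants. The paper obtains this structure by citing Theorem~2 of \cite{djuang}, whereas you derive it directly from the locality of $\mathbb{Z}_{p^n}$; your handling of the $p=2$ subcases via the explicit values $|U'(\mathbb{Z}_{2^n})|\in\{1,2,4\}$ is also more detailed than the paper's, which simply asserts that $n=2$ is forced.
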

\begin{proof}
    ($\Rightarrow$) Suppose $Cl_2(\mathbb{Z}_{p^n}) \cong Cl_2(\mathbb{Z}_{q^m})$. From their structure, we consider the following cases:
    \begin{enumerate}
        \item Exactly one of $p,q$ is equal to $2$; without loss of generality, assume $p=2$. It must be that $n=2$, hence $\frac{q^m-q^{m-1}}{2}-1=0 \iff q^{m-1}(q-1)=2$. This results in two possibilities. First, $q^{m-1}=2$ and $q-1=1$, which gives contradicts, since $q=2$. Second, $q^{m-1}=1$ and $q-1=2$. So, we get $q=3$ dan $m=1$.
        \item If $p,q \neq 2$, then we must have $\frac{p^n-p^{n-1}}{2}-1=\frac{q^m-q^{m-1}}{2}-1$. This simplifies to $p^n-p^{n-1}=q^m-q^{m-1}$.
    \end{enumerate}
    ($\Leftarrow$) Given that $\{p^n,q^m\}=\{2^2,3^1\}$ or $p,q \neq 2$ with $p^n-p^{n-1} = q^m - q^{m-1})$. We refer back to Theorem 2 in \cite{djuang} and consider the following cases:
    \begin{enumerate}
        \item If $\{p^n,q^m\}=\{2^2,3^1\}$, then without loss of generality, assume $p^n=2^2=4$ and $q^m=3^1=3$. We obtain
        $Cl_2(\mathbb{Z}_{p^n}) = 2K_1 = Cl_2(\mathbb{Z}_{q^m})$.
        \item If $p,q \neq 2$ and $p^n-p^{n-1} = q^m - q^{m-1}$, then
        \begin{align*}
            Cl_2(\mathbb{Z}_{p^n}) &= 2K_1 \cup \left(\frac{p^n-p^{n-1}}{2}-1 \right) K_2 \\&= 2K_1 \cup \left(\frac{q^m-q^{m-1}}{2}-1 \right) K_2 = Cl_2(\mathbb{Z}_{q^m}).
        \end{align*}
    \end{enumerate}
\end{proof}

Based on the lemmas above, the following theorems are presented.
\begin{theorem}\label{teocorZpnpm}
    For any odd prime number $p$ and a natural number $n>1$, the following holds
	\begin{align*}
		Cl_2(\mathbb{Z}_{p^n}) \cong Cl_2(\mathbb{Z}_{q^m}) \iff &q=p^n-p^{n-1}+1 \text{ is prime number} \text{ and } m=1.
	\end{align*}
\end{theorem}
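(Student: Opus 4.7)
The plan is to use Lemma \ref{lemaisoZpnZqm} to reduce the theorem to the Diophantine equation $p^{n-1}(p-1) = q^{m-1}(q-1)$, and then solve it via $q$-adic valuations. Since $p$ is odd and $n > 1$, we have $p^n \geq 9$, so $p^n \notin \{3,4\}$ and the exceptional clause $\{p^n, q^m\} = \{2^2, 3^1\}$ of Lemma \ref{lemaisoZpnZqm} is vacuous. Hence $Cl_2(\mathbb{Z}_{p^n}) \cong Cl_2(\mathbb{Z}_{q^m})$ is equivalent to $q$ being an odd prime together with the displayed equation.

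For $(\Leftarrow)$ I would simply substitute $m = 1$ and $q = p^n - p^{n-1} + 1$ (prime by hypothesis, and since $p \geq 3$, $n \geq 2$ we get $q \geq 7$ hence odd): this gives $q - 1 = p^{n-1}(p-1)$, exactly the reduced equation, and Lemma \ref{lemaisoZpnZqm} returns the isomorphism.

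For $(\Rightarrow)$ the plan is to force $m = 1$ from the equation. Assuming $m \geq 2$ for contradiction, $q \mid q^{m-1}(q-1) = p^{n-1}(p-1)$; primality of $q$ splits this into $q = p$ or $q \mid p-1$. The case $q = p$ cancels to give $n = m$, reducing to the trivial self-identification $\mathbb{Z}_{p^n} \cong \mathbb{Z}_{p^n}$, which the theorem treats as non-informative. For $q \neq p$ with $q \mid p-1$, I would write $p - 1 = q^a s$ with $a \geq 1$ and $\gcd(s, q) = 1$, compare $q$-adic valuations on both sides to conclude $m - 1 = a$, and cancel $q^a$ to obtain
$$p^{n-1} s = q - 1.$$
Since $q \leq p - 1$ gives $q - 1 \leq p - 2$, while $p^{n-1} s \geq p$ (as $n \geq 2$, $s \geq 1$), this forces $p \leq p - 2$, a contradiction. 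Thus $m = 1$, and the equation then yields $q = p^n - p^{n-1} + 1$ prime.

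The main obstacle is the subcase $q \mid p-1$, where divisibility alone admits many candidate values of $q$; the contradiction only emerges by combining the $q$-adic valuation analysis with a crude size estimate. The subcase $q = p$ should also be acknowledged, as it technically satisfies the LHS of the biconditional but not the RHS when $n > 1$, so the theorem is best read as characterising isomorphisms with a \emph{distinct} prime power.
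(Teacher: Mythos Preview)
Your proposal is correct, and its overall architecture matches the paper's: reduce via Lemma~\ref{lemaisoZpnZqm} to the equation $p^{n-1}(p-1)=q^{m-1}(q-1)$, then derive a contradiction from $m\ge 2$. The number-theoretic core, however, is genuinely different. The paper parametrises the odd primes as $p=2a+1$, $q=2b+1$, rewrites the equation as $p^{n-1}a=q^{m-1}b$, observes $\gcd(p,a)=\gcd(q,b)=1$ from $p>a$, $q>b$, and then splits on $p>q$ versus $p<q$ to obtain a divisibility contradiction (e.g.\ $p\mid p^{n-1}a$ but $p\nmid q^{m-1}b$). You instead compute the $q$-adic valuation of both sides to pin down $m-1=v_q(p-1)$ exactly, cancel, and finish with the size estimate $p^{n-1}s\ge p>p-2\ge q-1$. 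Your route is a little more direct (no parity parametrisation, no symmetric case split) and makes the arithmetic obstruction explicit; the paper's route is more symmetric in $p$ and $q$ and avoids the auxiliary exponent $a$. You also flag the degenerate subcase $q=p$ (forcing $n=m$), which the paper's $p>q$/$p<q$ dichotomy handles only implicitly; since Lemma~\ref{lemaisoZpnZqm} is stated for \emph{distinct} primes, your remark that the theorem should be read as classifying isomorphisms with a distinct prime power is well taken.
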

\begin{proof}
    For any odd prime number $p$ and a natural number $n>1$, if $Cl_2(\mathbb{Z}_{p^n}) \cong Cl_2(\mathbb{Z}_{q^m})$, based on the Theorem \ref{lemaisoZpnZqm}, $q \neq 2$ and $p^n-p^{n-1}=q^m-q^{m-1}$. Assume that $m \neq 1$, it means $m>1$. Let $p=2a+1$ and $q=2b+1$ for some $a,b \in \mathbb{Z}^+$. Consequently, $p^{n-1}a=q^{m-1}b$. Moreover, $p>a$ and $q>b$. Hence, $\gcd(p,a)=\gcd(q,b)=1$.
	If $p>q$, then $a>b$ and $\gcd(p,b)=1$. In this case, $p \mid p^{n-1}a$, but $p \nmid q^{m-1}b$ (contradiction). If $p<q$, then $a<b$ and $\gcd(q,a)=1$. Thus, $q \mid q^{m-1}b$, but $q \nmid p^{n-1}a$ (contradiction). Hence, $m=1$. Furthermore, $p^n-p^{n-1}=q-1 \iff q=p^n-p^{n-1}+1$. Therefore, $p^n-p^{n-1}+1$ must be a prime number. On the other hand, if $q=p^n-p^{n-1}+1$, where $n>1$, then $q$ is odd prime number and $p^n-p^{n-1}=q-1=q^m-q^{m-1}$, where $m=1$. Using Theorem \ref{lemaisoZpnZqm}, we obtain $Cl_2(\mathbb{Z}_{p^n}) \cong Cl_2(\mathbb{Z}_{q^m})$.
\end{proof}

In the study by \cite{pongthana}, necessary and sufficient conditions were established for $Cl_2(\mathbb{Z}_{p^n r})\cong Cl_2(\mathbb{Z}_{q^m r})$, where $p$ and $q$ are distinct primes and $n,m,r$ are natural numbers such that $\gcd(p,r)=\gcd(q,r)=1$. Here, we discuss a gene\-ralization of that result, which is presented in the following theorem and corollary.
\begin{theorem}\label{teoisoZpnkZqmk}
    Given rings $\mathbb{Z}_{p^n}$ and $\mathbb{Z}_{q^m}$ with $p,q$ are prime numbers and $n,m$ are natural numbers such that $Cl_2(\mathbb{Z}_{p^n}) \cong Cl_2(\mathbb{Z}_{q^m})$, the following holds:
    \begin{align*}
        Cl_2(\mathbb{Z}_{p^n} \times \mathbb{Z}_k) \cong Cl_2(\mathbb{Z}_{q^m} \times \mathbb{Z}_k)
    \end{align*}
    for any natural number $k$.
\end{theorem}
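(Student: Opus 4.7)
The plan is to establish the claim via the shuriken characterization from \cite{djuang}, namely $Cl_2(R)\cong Shu^{|U'(R)|}_{|U(R)|}(I(R))$, by verifying that the three invariants controlling this construction --- the cardinalities $|U(R)|$ and $|U'(R)|$, and the isomorphism type of the idempotent graph $I(R)$ --- all agree between $R_1=\mathbb{Z}_{p^n}\times\mathbb{Z}_k$ and $R_2=\mathbb{Z}_{q^m}\times\mathbb{Z}_k$.

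First I would extract structural equalities from the hypothesis. Corollary \ref{equalisoIdU} applied to $Cl_2(\mathbb{Z}_{p^n})\cong Cl_2(\mathbb{Z}_{q^m})$ yields $|U(\mathbb{Z}_{p^n})|=|U(\mathbb{Z}_{q^m})|$ and $|Id(\mathbb{Z}_{p^n})|=|Id(\mathbb{Z}_{q^m})|$; since both prime-power rings are local, in fact $Id(\mathbb{Z}_{p^n})=Id(\mathbb{Z}_{q^m})=\{0,1\}$ automatically. Whenever $|U(\mathbb{Z}_{p^n})|>1$, Lemma \ref{equalisoU'} further gives $|U'(\mathbb{Z}_{p^n})|=|U'(\mathbb{Z}_{q^m})|$. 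The degenerate case $|U(\mathbb{Z}_{p^n})|=1$ forces $p^n=2$, and then a one-vertex $Cl_2$ forces $q^m=2$ as well, making the conclusion trivial.

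Next I would propagate these equalities across the product with $\mathbb{Z}_k$ using the standard multiplicativities $|U(R\times S)|=|U(R)|\cdot|U(S)|$ and $|U'(R\times S)|=|U'(R)|\cdot|U'(S)|$ (the latter because $(u,v)^2=(1,1)$ iff $u^2=1$ and $v^2=1$), immediately obtaining $|U(R_1)|=|U(R_2)|$ and $|U'(R_1)|=|U'(R_2)|$. For the idempotent graph, since $\mathbb{Z}_{p^n}$ and $\mathbb{Z}_{q^m}$ share the same two-element idempotent set with identical multiplication table, the idempotents of $R_1$ and $R_2$ are in canonical bijection via $(e,f)\leftrightarrow(e,f)$ with $e\in\{0,1\}$ and $f\in Id(\mathbb{Z}_k)$; adjacency is controlled by $(e,f)(e',f')=(0,0)$, which depends only on $ee'$ in $\{0,1\}$ and $ff'$ in $\mathbb{Z}_k$. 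Restricting to nontrivial idempotents then gives a graph isomorphism $I(R_1)\cong I(R_2)$.

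Finally, I would invoke the shuriken characterization to conclude: both $Cl_2(R_1)$ and $Cl_2(R_2)$ are realized as $Shu^t_N(G)$ with the same parameters $t=|U'(R_i)|$, $N=|U(R_i)|$ applied to isomorphic input graphs, and the shuriken construction is patently functorial in its input, so the two clean graphs are isomorphic. The step I would watch most carefully is the identification of $I(R_1)$ with $I(R_2)$: although the vertex counts match trivially, I want to be certain that the adjacencies induced by the two different first-coordinate multiplications actually coincide, which they do precisely because both prime-power factors are local and therefore share the identical multiplication table $0\cdot 0=0\cdot 1=0$, $1\cdot 1=1$ on $\{0,1\}$.
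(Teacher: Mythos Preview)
Your proof is correct but proceeds along a genuinely different line from the paper. The paper builds an explicit graph isomorphism $\varphi\colon V(Cl_2(\mathbb{Z}_{p^n}\times\mathbb{Z}_k))\to V(Cl_2(\mathbb{Z}_{q^m}\times\mathbb{Z}_k))$: it enumerates the idempotents and units of each product ring, pairs the non-self-inverse units $c_i\leftrightarrow d_i$ so that inverses correspond, writes down $\varphi$ on every vertex type, and then checks edge preservation by a case analysis on whether adjacency comes from the idempotent or the unit condition. You instead invoke the structural result $Cl_2(R)\cong Shu^{|U'(R)|}_{|U(R)|}(I(R))$ from \cite{djuang} and verify that the three inputs $|U(R)|$, $|U'(R)|$, $I(R)$ coincide for $R_1=\mathbb{Z}_{p^n}\times\mathbb{Z}_k$ and $R_2=\mathbb{Z}_{q^m}\times\mathbb{Z}_k$, using Corollary~\ref{equalisoIdU}, Lemma~\ref{equalisoU'}, multiplicativity of $|U|$ and $|U'|$ over products, and the fact that both prime-power factors have the same two-element idempotent set. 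Your route is shorter and more conceptual, and it makes transparent exactly which invariants drive the isomorphism --- indeed, it suggests immediately why the conjecture following the theorem should hold. The paper's explicit construction, by contrast, is self-contained at the level of vertices and edges and does not require appealing to the shuriken theorem as a black box.
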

\begin{proof}
    Consider that
    \begin{align*}
        Id(\mathbb{Z}_{p^n}) \setminus \{0\} =  Id(\mathbb{Z}_{q^m}) \setminus \{0\} = \{1\}.
    \end{align*}
    Let
    \begin{align*}
        U(\mathbb{Z}_{p^n})&=\{1,p^n-1,c_1,c_2,\dots,c_t\}\\
        U(\mathbb{Z}_{q^m})&=\{1,q^m-1,d_1,d_2,\dots,d_t\}
    \end{align*} where $t=p^n-p^{n-1}-2$ are even number with $c_ic_{t+1-i}=1$ and $d_id_{t+1-i}=1$ for each $i \in \{1,2,\dots,t\}$.
    We get
    \begin{align*}
        V(Cl_2(\mathbb{Z}_{p^n}))&=\{(1,1),(1,p^n-1),(1,c_i): i \in \{1,2,\dots,t\} \}\\
        V(Cl_2(\mathbb{Z}_{q^m}))&=\{(1,1),(1,q^m-1),(1,d_i): i \in \{1,2,\dots,t\} \}.
    \end{align*}
   Regarding the isomorphism of the graphs $Cl_2(\mathbb{Z}_{p^n})$ and $Cl_2(\mathbb{Z}_{q^m})$, there exists a bijective function $f: V(Cl_2(\mathbb{Z}_{p^n})) \to V(Cl_2(\mathbb{Z}_{q^m}))$ such that 
    $f(1,1)=(1,1)$, $f(1,p^n-1)=f(1,q^m-1)$, and $f(1,c_i)=(1,d_i)$ for every $i \in \{1,2,\dots,t\}$.\\ \hspace{0.4cm}
    Next, let $\mathbb{Z}_k$ be a ring with a natural number $k$. Let
    \begin{align*}
        Id(\mathbb{Z}_k) \setminus \{0\} &= \{e_1=1,e_2,e_3,\dots,e_a\}\\
        U(\mathbb{Z}_k) &= \{u_1=1,u_2,u_3,\dots,u_b\}
    \end{align*} for some $a,b \in \mathbb{Z}^+$. Consider that
    \begin{align*}
        V(Cl_2(\mathbb{Z}_{p^n} \times \mathbb{Z}_k)) = Id(\mathbb{Z}_{p^n} \times \mathbb{Z}_k) \setminus \{(0,0)\} \times U(\mathbb{Z}_{p^n} \times \mathbb{Z}_k)\\
        V(Cl_2(\mathbb{Z}_{q^m} \times \mathbb{Z}_k)) = Id(\mathbb{Z}_{q^m} \times \mathbb{Z}_k) \setminus \{(0,0)\} \times U(\mathbb{Z}_{q^m} \times \mathbb{Z}_k)
    \end{align*}
    with
    \begin{align*}
        &Id(\mathbb{Z}_{p^n} \times \mathbb{Z}_k) \setminus \{(0,0)\} = \{(1,0),(0,e_i),(1,e_i): i \in \{1,2,\dots,a\}\}\\
        &U(\mathbb{Z}_{p^n} \times \mathbb{Z}_k) = \{(1,u_i), (p^n-1, u_i), (c_j,u_i): i \in \{1,2,\dots,b\}, j \in \{1,2,\dots,t\}\}\\
        &Id(\mathbb{Z}_{q^m} \times \mathbb{Z}_k) \setminus \{(0,0)\} = \{(1,0),(0,e_i),(1,e_i): i \in \{1,2,\dots,a\}\}\\
        &U(\mathbb{Z}_{q^m} \times \mathbb{Z}_k) = \{(1,u_i), (q^m-1, u_i), (d_j,u_i): i \in \{1,2,\dots,b\}, j \in \{1,2,\dots,t\}\}.
    \end{align*}
    A bijective function $\varphi: V(Cl_2(\mathbb{Z}_{p^n} \times \mathbb{Z}_k)) \to V(Cl_2(\mathbb{Z}_{q^m} \times \mathbb{Z}_k))$ is constructed with
    \begin{align*}
        \varphi((1,0),(1,u_i))&=((1,0),(1,u_i))\\
        \varphi((1,0),(p^n-1,u_i))&=((1,0),(q^m-1,u_i))\\
        \varphi((1,0),(c_j,u_i))&=((1,0),(d_j,u_i))\\
        \varphi((0,e_l),(1,u_i))&=((0,e_l),(1,u_i))\\
        \varphi((0,e_l),(p^n-1,u_i))&=((0,e_l),(q^m-1,u_i))\\
        \varphi((0,e_l),(c_j,u_i))&=((0,e_l),(d_j,u_i))\\
        \varphi((1,e_l),(1,u_i))&=((1,e_l),(1,u_i))\\
        \varphi((1,e_l),(p^n-1,u_i))&=((1,e_l),(q^m-1,u_i))\\
        \varphi((1,e_l),(c_j,u_i))&=((1,e_l),(d_j,u_i))
    \end{align*} for any $l \in \{1,2,\dots,a\}$, $j \in \{1,2,\dots,t\}$, and $i \in \{1,2,\dots,b\}$.\\
    Let $((x_1,x_2),(y_1,y_2)),((x_3,x_4),(y_3,y_4)) \in V(Cl_2(\mathbb{Z}_{p^n} \times \mathbb{Z}_k))$ be arbitrary such that $((x_1,x_2),(y_1,y_2))((x_3,x_4),(y_3,y_4)) \in E(Cl_2(\mathbb{Z}_{p^n} \times \mathbb{Z}_k))$. It means
    \begin{align*}
        &(x_1,x_2)(x_3,x_4)=(0,0) \text{ or } (y_1,y_2)(y_3,y_4)=(1,1)\\
        \iff &(x_1x_3=0 \text{ , } x_2x_4=0) \text{ or } (y_1y_3=1 \text{ , } y_2y_4=1).
    \end{align*}
    Since 
        \begin{align*}
        \varphi((x_1,x_2),(y_1,y_2))&=((x_1,x_2),(y'_1,y_2)) \text{ and }\\ \varphi((x_3,x_4),(y_3,y_4))&=((x_3,x_4),(y'_3,y_4))
        \end{align*} for any $y'_1,y'_3 \in U(\mathbb{Z}_{q^m})$, the following two cases are considered.
    \begin{enumerate}
        \item Case $x_1x_3=0$ and $x_2x_4=0$. We get $$\varphi((x_1,x_2),(y_1,y_2))\varphi((x_3,x_4),(y_3,y_4)) \in E(Cl_2(\mathbb{Z}_{q^m} \times \mathbb{Z}_k)).$$
        \item Case $y_1y_3=1$ and $y_2y_4=1$. 
        \begin{itemize}
            \item [a. ] If $y_1=1$, then $y_3=1$. Consequently, $y'_1=y'_3=1 \iff y'_1y'_3=1$.
            \item [b. ] If $y_1=p^n-1$, then $y_3=p^n-1$. Consequently, $y'_1=y'_3=q^m-1 \iff y'_1y'_3=1$.
            \item [c. ] If $y_1=c_i$, then $y_3=c_{t+1-i}$ where $i \in \{1,2,\dots,t\}$. Hence $y'_1=d_i$ and $y'_3=d_{t+1-i}$, so $y'_1y'_3=1$.
        \end{itemize}
        Thus, $\varphi((x_1,x_2),(y_1,y_2))\varphi((x_3,x_4),(y_3,y_4)) \in E(Cl_2(\mathbb{Z}_{q^m} \times \mathbb{Z}_k)).$
    \end{enumerate}
    Let $((x_1,x_2),(y_1,y_2)),((x_3,x_4),(y_3,y_4)) \in V(Cl_2(\mathbb{Z}_{p^n} \times \mathbb{Z}_k))$ be arbitrary such that $\varphi((x_1,x_2),(y_1,y_2))\varphi((x_3,x_4),(y_3,y_4)) \in E(Cl_2(\mathbb{Z}_{q^m} \times \mathbb{Z}_k))$. Since 
        \begin{align*}
        \varphi((x_1,x_2),(y_1,y_2))&=((x_1,x_2),(y'_1,y_2)) \text{ and }\\ \varphi((x_3,x_4),(y_3,y_4))&=((x_3,x_4),(y'_3,y_4))
        \end{align*} for some $y'_1,y'_3 \in U(\mathbb{Z}_{q^m})$, we get
    \begin{align*}
        &(x_1,x_2)(x_3,x_4)=(0,0) \text{ or } (y'_1,y_2)(y'_3,y_4)=(1,1)\\
        \iff &(x_1x_3=0 \text{ , } x_2x_4=0) \text{ or } (y'_1y'_3=1 \text{ , } y_2y_4=1).
    \end{align*}
        Next, the following two cases are considered.
    \begin{enumerate}
        \item Case $x_1x_3=0$ and $x_2x_4=0$. Thus $$((x_1,x_2),(y_1,y_2))((x_3,x_4),(y_3,y_4)) \in E(Cl_2(\mathbb{Z}_{p^n} \times \mathbb{Z}_k)).$$
        \item Case $y'_1y'_3=1$ and $y_2y_4=1$. 
        \begin{itemize}
            \item [a. ] If $y'_1=1$, then $y'_3=1$. Consequently, $y_1=y_3=1 \iff y_1y_3=1$.
            \item [b. ] If $y'_1=q^m-1$, then $y'_3=q^m-1$. Consequently, $y_1=y_3=p^n-1 \iff y_1y_3=1$.
            \item [c. ] If $y'_1=d_i$, then $y'_3=d_{t+1-i}$ where $i \in \{1,2,\dots,t\}$. Hence $y_1=c_i$ and $y_3=c_{t+1-i}$, so $y_1y_3=1$.
        \end{itemize}
    \end{enumerate}
\end{proof}

\begin{corollary}
    Given the rings $\mathbb{Z}_{p^n} \times \mathbb{Z}_k$ and $\mathbb{Z}_{q^m} \times \mathbb{Z}_k$, where $p,q$ are distinct odd prime numbers and $n,m,k$ are positive integers, the following holds:
    \begin{align*}
        Cl_2(\mathbb{Z}_{p^n} \times \mathbb{Z}_k) \cong Cl_2(\mathbb{Z}_{q^m} \times \mathbb{Z}_k) \iff p^n-p^{n-1} = q^m - q^{m-1}.
    \end{align*}
    Furthermore, for $n>1$, we have
    \begin{align*}
    	Cl_2(\mathbb{Z}_{p^n} \times \mathbb{Z}_k) \cong Cl_2(\mathbb{Z}_{q^m} \times \mathbb{Z}_k) \iff &q=p^n-p^{n-1}+1 \text{ is a prime number  } \\ &\text{and } m=1.
    \end{align*}
\end{corollary}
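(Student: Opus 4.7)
The plan is to deduce both biconditionals of the corollary from the infrastructure already in place, chiefly Lemma~\ref{lemaisoZpnZqm}, Theorem~\ref{teoisoZpnkZqmk}, Corollary~\ref{equalisoIdU}, and Theorem~\ref{teocorZpnpm}. The key observation is that the whole statement reduces to comparing the single-prime-power clean graphs $Cl_2(\mathbb{Z}_{p^n})$ and $Cl_2(\mathbb{Z}_{q^m})$, because Theorem~\ref{teoisoZpnkZqmk} transports an isomorphism of these graphs across the direct factor $\mathbb{Z}_k$ in one direction, while a cardinality count on units recovers it in the other.

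For the backward direction of the first equivalence, I would assume $p^n - p^{n-1} = q^m - q^{m-1}$. Since $p$ and $q$ are odd, the second clause of Lemma~\ref{lemaisoZpnZqm} applies and gives $Cl_2(\mathbb{Z}_{p^n}) \cong Cl_2(\mathbb{Z}_{q^m})$; invoking Theorem~\ref{teoisoZpnkZqmk} then delivers the desired isomorphism $Cl_2(\mathbb{Z}_{p^n} \times \mathbb{Z}_k) \cong Cl_2(\mathbb{Z}_{q^m} \times \mathbb{Z}_k)$ for any natural number $k$. For the forward direction, suppose $Cl_2(\mathbb{Z}_{p^n} \times \mathbb{Z}_k) \cong Cl_2(\mathbb{Z}_{q^m} \times \mathbb{Z}_k)$; Corollary~\ref{equalisoIdU} yields $|U(\mathbb{Z}_{p^n} \times \mathbb{Z}_k)| = |U(\mathbb{Z}_{q^m} \times \mathbb{Z}_k)|$. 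Since the unit group of a direct product of rings with identity splits as the product of the unit groups, cancelling the nonzero factor $|U(\mathbb{Z}_k)|$ gives $|U(\mathbb{Z}_{p^n})| = |U(\mathbb{Z}_{q^m})|$, which is exactly $p^n - p^{n-1} = q^m - q^{m-1}$.

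The refinement under $n > 1$ then follows by chaining the equivalence just established with Theorem~\ref{teocorZpnpm}: for an odd prime $p$ with $n > 1$, the graph condition $Cl_2(\mathbb{Z}_{p^n}) \cong Cl_2(\mathbb{Z}_{q^m})$ is equivalent to $q = p^n - p^{n-1} + 1$ being prime and $m = 1$, and this in turn is equivalent to the product-graph isomorphism by the first part of the corollary. Rather than a substantive obstacle, the only care required is bookkeeping: verifying that the oddness of $p$ and $q$ excludes the exceptional case $\{p^n, q^m\} = \{2^2, 3^1\}$ of Lemma~\ref{lemaisoZpnZqm}, and noting $|U(\mathbb{Z}_k)| \geq 1$ so the cancellation is legitimate.
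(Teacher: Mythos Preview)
Your proposal is correct and follows essentially the same route as the paper: use Corollary~\ref{equalisoIdU} and the multiplicativity of unit groups to cancel $|U(\mathbb{Z}_k)|$ for the forward direction, use Lemma~\ref{lemaisoZpnZqm} followed by Theorem~\ref{teoisoZpnkZqmk} for the backward direction, and then chain with Theorem~\ref{teocorZpnpm} for the $n>1$ refinement. Your explicit remark that oddness of $p,q$ rules out the exceptional clause $\{p^n,q^m\}=\{2^2,3^1\}$ in Lemma~\ref{lemaisoZpnZqm} is a small clarification the paper leaves implicit.
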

\begin{proof}
    ($\Rightarrow$) Suppose $Cl_2(\mathbb{Z}_{p^n} \times \mathbb{Z}_k) \cong Cl_2(\mathbb{Z}_{q^m} \times \mathbb{Z}_k)$. Using Corollary \ref{equalisoIdU}, we obtain
    \begin{align*}
        &| Id(\mathbb{Z}_{p^n} \times \mathbb{Z}_k)|= |Id(\mathbb{Z}_{q^m} \times \mathbb{Z}_k)| \text{ and } |U(\mathbb{Z}_{p^n} \times \mathbb{Z}_k)|=|U(\mathbb{Z}_{q^m} \times \mathbb{Z}_k)|\\
        \Longrightarrow &|U(\mathbb{Z}_{p^n}) \times U(\mathbb{Z}_k)|=|U(\mathbb{Z}_{q^m}) \times U(\mathbb{Z}_k)|\\
        \iff &|U(\mathbb{Z}_{p^n})|  |U(\mathbb{Z}_k)|=|U(\mathbb{Z}_{q^m})|  |U(\mathbb{Z}_k)|.
    \end{align*}
    Since $|U(\mathbb{Z}_k)| \geq 1$, we obtain $|U(\mathbb{Z}_{p^n})|=|U(\mathbb{Z}_{q^m})|$. Thus,
        $$p^n\left(1-\frac{1}{p}\right)= q^m\left(1-\frac{1}{q}\right) \iff p^n-p^{n-1}=q^m-q^{m-1}.$$
        Furthermore, since $p,q$ are odd prime numbers, at least $n>1$ or $m>1$. Without loss of generality, let $n>1$. Using Theorem \ref{teocorZpnpm}, $q=p^n-p^{n-1}+1$ and $m=1$.\\
    ($\Leftarrow$) Suppose $p$ and $q$ are odd prime numbers with $p^n-p^{n-1}=q^m-q^{m-1}$. Using Lemma \ref{lemaisoZpnZqm}, we obtain $Cl_2(\mathbb{Z}_{p^n}) \cong Cl_2(\mathbb{Z}_{q^m})$. Based on Theorem \ref{teoisoZpnkZqmk}, we conclude that $Cl_2(\mathbb{Z}_{p^n} \times \mathbb{Z}_k) \cong Cl_2(\mathbb{Z}_{q^m} \times \mathbb{Z}_k)$. In a more specific case, where $q=p^n-p^{n-1}+1$ and $m=1$, we also obtain $p^n-p^{n-1}=q^m-q^{m-1}$. Using Lemma \ref{lemaisoZpnZqm}, $Cl_2(\mathbb{Z}_{p^n}) \cong Cl_2(\mathbb{Z}_{q^m})$. Based on Theorem \ref{teoisoZpnkZqmk}, $Cl_2(\mathbb{Z}_{p^n} \times \mathbb{Z}_k) \cong Cl_2(\mathbb{Z}_{q^m} \times \mathbb{Z}_k)$.
\end{proof}

From Theorem \ref{teoisoZpnkZqmk}, when considered in the general case for an arbitrary ring, we obtain the following conjecture.
\begin{conjecture}
    Given the rings $R_1, R_2, P_1,$ and $P_2$ such that $Cl_2(R_1) \cong Cl_2(R_2)$ dan $P_1 \cong P_2$. The following holds: 
    $$Cl_2(R_1 \times P_1) \cong Cl_2(R_2 \times P_2).$$
\end{conjecture}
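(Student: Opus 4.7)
The plan is to lift the explicit construction from Theorem \ref{teoisoZpnkZqmk} to the abstract setting. From $P_1 \cong P_2$, fix a ring isomorphism $\psi : P_1 \to P_2$, which restricts to product-preserving bijections on idempotents and on units. The harder input is on the $R$-side: from the assumption $Cl_2(R_1) \cong Cl_2(R_2)$, one must extract \emph{algebraic} bijections
$$\alpha : Id(R_1) \to Id(R_2), \qquad \beta : U(R_1) \to U(R_2),$$
satisfying $\alpha(0)=0$, $\alpha(1)=1$, $\beta(1)=1$, together with $ab = 0 \iff \alpha(a)\alpha(b) = 0$ and $uv = 1 \iff \beta(u)\beta(v) = 1$. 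Equivalently, $\alpha$ restricts to an isomorphism of idempotent graphs $I(R_1) \cong I(R_2)$, and $\beta$ carries $U'(R_1)$ onto $U'(R_2)$ and inverse pairs in $U''(R_1)$ to inverse pairs in $U''(R_2)$.

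Assuming $(\alpha, \beta)$ are available, define
$$\varphi((a,p),(u,v)) = ((\alpha(a), \psi(p)), (\beta(u), \psi(v)))$$
on $V(Cl_2(R_1 \times P_1))$. Well-definedness uses $\alpha(0)=0$ to ensure $(\alpha(a),\psi(p)) \neq (0,0)$ when $(a,p) \neq (0,0)$, together with $\psi$ being a ring isomorphism so that $(\beta(u),\psi(v)) \in U(R_2 \times P_2)$; bijectivity is then immediate. For adjacency, the two defining cases split coordinatewise: $(a,p)(a',p')=(0,0)$ iff $aa'=0$ and $pp'=0$, and $(u,v)(u',v')=(1,1)$ iff $uu'=1$ and $vv'=1$. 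The orthogonality case is preserved because $\alpha$ preserves orthogonality of idempotents and $\psi$ is a ring homomorphism; the unit case is preserved because $\beta$ preserves the inverse relation and $\psi$ preserves products of units. This portion of the argument closely mirrors the case analysis in Theorem \ref{teoisoZpnkZqmk} and is essentially bookkeeping.

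The central obstacle is producing $(\alpha, \beta)$ from the graph isomorphism $f : V(Cl_2(R_1)) \to V(Cl_2(R_2))$. The map $f$ need not factor through the two coordinates, and Lemma \ref{equalisoU'} only records that $f$ respects the $U'/U''$ partition and the invariant $O_a$. The intended route is to exploit the shuriken decomposition $Cl_2(R) \cong Shu^{|U'(R)|}_{|U(R)|}(I(R))$ from \cite{djuang}: Corollary \ref{equalisoIdU} and Lemma \ref{equalisoU'} already match the shuriken parameters $|U(R)|$ and $|U'(R)|$ across $R_1$ and $R_2$, which should allow one to recognize $I(R_i)$ intrinsically inside $Cl_2(R_i)$ (for instance as the subgraph induced by a distinguished ``blade'') and thereby read off an isomorphism $I(R_1) \cong I(R_2)$, from which $\alpha$ is extracted. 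The involutive matching $uv = 1$ on $U''(R_i)$ should likewise be visible in how blades are glued, allowing $\beta$ to be defined by pairing matched blades. If this intrinsic recovery succeeds, the conjecture follows; any failure of it would be the natural place to search for a counterexample.
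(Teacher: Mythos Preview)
The paper does not prove this statement; it is explicitly presented as a \emph{conjecture}, introduced immediately after Theorem~\ref{teoisoZpnkZqmk} as its natural generalization to arbitrary rings. There is therefore no proof in the paper to compare against.

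Your proposal is not a proof either, as you yourself acknowledge. The reduction you sketch is sound as far as it goes: once one has an orthogonality-preserving bijection $\alpha : Id(R_1) \to Id(R_2)$ fixing $0$ and $1$, together with an inverse-relation-preserving bijection $\beta : U(R_1) \to U(R_2)$, the coordinatewise map $\varphi$ is indeed a graph isomorphism $Cl_2(R_1 \times P_1) \to Cl_2(R_2 \times P_2)$, and the verification is precisely the bookkeeping carried out in Theorem~\ref{teoisoZpnkZqmk}. The existence of $\beta$ is genuinely easy, since Lemma~\ref{equalisoU'} gives $|U'(R_1)| = |U'(R_2)|$ and hence $|U''(R_1)| = |U''(R_2)|$, and any bijection sending $U'$ to $U'$ and matching inverse pairs in $U''$ will serve.

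The gap is entirely in producing $\alpha$. What you need is that $Cl_2(R_1) \cong Cl_2(R_2)$ forces $I(R_1) \cong I(R_2)$; equivalently, via the shuriken decomposition, that $Shu^t_k(G_1) \cong Shu^t_k(G_2)$ implies $G_1 \cong G_2$ for fixed $t,k$. Lemma~\ref{equalisoU'} tells you only that a clean-graph isomorphism respects the numerical invariant $O_e$ vertexwise, which is far weaker than recovering the orthogonality \emph{relation} among idempotents. Your suggestion to locate a distinguished blade inside the shuriken graph and read off $I(R)$ from it is a reasonable line of attack, but it is not carried out here, and it is not at all clear that an arbitrary isomorphism of $Shu^t_k(G)$ must respect the blade decomposition. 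Until that injectivity step is established (or a counterexample found), the statement remains open, which is exactly the status the paper assigns it.
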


\section{The Structural Insight into $M_2(\mathbb{Z}_p)$}

We consider the clean graph $Cl_2$ over the ring $M_{2}(\mathbb{Z}_p)$, where $p$ is a prime number. Since $\mathbb{Z}_p$ is a field for any prime $p$, applying Theorem 6 in \cite{djuang} and Proposition 3.7 in \cite{patil} , we obtain the following corollary:

\begin{corollary}
Let $p$ be a prime number. Then,

$$
Cl_2(M_{2}(\mathbb{Z}_p)) \cong 
\begin{cases}
Shu^4_6(3K_2), & \text{if } p = 2, \\
Shu^{p^2 + p + 2}_{p^4 - p^3 - p^2 + p}\left(\frac{p(p+1)}{2}K_2 \right), & \text{if } p > 2.
\end{cases}
$$

\end{corollary}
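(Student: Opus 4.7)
The plan is to combine Theorem 6 of \cite{djuang}, which provides the shuriken identification $Cl_2(R)\cong Shu^{t}_{k}(I(R))$ with $t=|U'(R)|$ and $k=|U(R)|$, together with Proposition 3.7 of \cite{patil}, which pins down the idempotent graph of $M_2$ over a field. With these tools in hand, the corollary reduces to computing three ingredients for $R = M_2(\mathbb{Z}_p)$: the size $k$ of $GL_2(\mathbb{F}_p)$, the number $t$ of involutions in $GL_2(\mathbb{F}_p)$, and the graph $I(M_2(\mathbb{Z}_p))$; the two branches of the final formula will then be a direct substitution.

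First I would invoke the standard order formula to record
$$k = |GL_2(\mathbb{F}_p)| = (p^2-1)(p^2-p) = p^4-p^3-p^2+p.$$
For the idempotent graph, I would appeal to Proposition 3.7 of \cite{patil} and rehearse its structural content: since $\mathbb{Z}_p$ is a field, the nontrivial idempotents of $M_2(\mathbb{Z}_p)$ are exactly the rank-one projections, parametrized by ordered pairs $(V,W)$ of complementary lines in $\mathbb{F}_p^2$, giving $(p+1)p$ of them. The relation $ef=fe=0$ on rank-one idempotents forces $\operatorname{im}(f)=\ker(e)$ and $\operatorname{im}(e)=\ker(f)$, so it pairs each idempotent with a unique partner, yielding $I(M_2(\mathbb{Z}_p)) \cong \tfrac{p(p+1)}{2}K_2$ for every prime $p$.

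The main obstacle is computing $t$, the number of involutions in $GL_2(\mathbb{F}_p)$, because the characteristic-two case requires a separate treatment. For odd $p$, any matrix $A$ with $A^2=I$ has minimal polynomial dividing $(X-1)(X+1)$, which is separable; hence $A$ is diagonalizable with eigenvalues in $\{\pm 1\}$. The scalars $\pm I$ contribute two involutions, while every other involution lies in the conjugacy class of $\operatorname{diag}(1,-1)$, whose centralizer is the diagonal torus of order $(p-1)^2$; thus the class has size $(p^2-1)(p^2-p)/(p-1)^2 = p(p+1)$, and summing gives $t = p^2+p+2$. For $p=2$, where $(X-1)^2 = X^2-1$ and non-diagonalizable involutions appear, I would instead enumerate directly inside $GL_2(\mathbb{F}_2)\cong S_3$ to conclude $t=4$.

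Substituting the three computed values into the shuriken formula from \cite{djuang} produces exactly $Shu^{4}_{6}(3K_2)$ when $p=2$ and $Shu^{p^2+p+2}_{p^4-p^3-p^2+p}\bigl(\tfrac{p(p+1)}{2}K_2\bigr)$ when $p>2$, completing the corollary. The shuriken parity requirement $k-t$ even is automatic, since the non-self-inverse units of any ring pair off into inverse pairs, so no case-by-case verification is needed there.
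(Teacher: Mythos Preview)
Your proposal is correct and follows the same overall scaffold as the paper: invoke Theorem~6 of \cite{djuang} to reduce to computing $k=|U(R)|$, $t=|U'(R)|$, and $I(R)$, then cite \cite{patil} for the idempotent graph and substitute. The one substantive difference lies in how you count the involutions. The paper solves the system arising from $A^2=I$ entrywise, splitting into four cases according to whether $b=0$, $c=0$, or $a+d=0$, and then tallies the solutions in each case to reach $p^2+p+2$ (with the $p=2$ value falling out of the same case analysis). Your argument via the minimal polynomial, diagonalizability for odd $p$, and the orbit--stabilizer count of the conjugacy class of $\operatorname{diag}(1,-1)$ is more conceptual and generalizes more readily (e.g.\ to $GL_n$), while the paper's entrywise method is more self-contained and avoids any appeal to linear-algebraic structure theory. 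Both land on the same numbers, and your separate handling of $p=2$ via $GL_2(\mathbb{F}_2)\cong S_3$ is a clean way to deal with the inseparable case that the paper's method absorbs into its case split.
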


\begin{proof}
    Theorem 6 in \cite{djuang} and Proposition 3.7 in \cite{patil}, we have $$
Cl_2(M_{2}(\mathbb{Z}_p)) \cong Shu^{t}_{n}\left(\frac{p(p+1)}{2}K_2 \right),$$
where $t = |U'(M_{2}(\mathbb{Z}_p))|$ and $n = |U(M_{2}(\mathbb{Z}_p))|$. We first examine the set $U(M_{2}(\mathbb{Z}_p))$. For any matrix $\begin{bmatrix} a & b\\  c & d
\end{bmatrix} \in M_{2}(\mathbb{Z}_p)$, it holds that:
\begin{align*}
    \begin{bmatrix}
    a & b\\
    c & d
    \end{bmatrix} \in U(M_{2}(\mathbb{Z}_p))
&\iff \det\begin{bmatrix}
a & b\\
c & d
\end{bmatrix} \in U(\mathbb{Z}_p) = \mathbb{Z}_p \setminus \{0\} \\
&\iff \det\begin{bmatrix}
a & b\\
c & d
\end{bmatrix} \ne 0 \\
&\iff \text{rank}\begin{bmatrix}
a & b\\
c & d
\end{bmatrix} = 2 \\
&\iff
\begin{bmatrix}
c & d
\end{bmatrix} \ne k
\begin{bmatrix}
a & b
\end{bmatrix}, \ \forall k \in \mathbb{Z}_p, \\& \quad \qquad \text{ and }
\begin{bmatrix}
a & b
\end{bmatrix} \ne
\begin{bmatrix}
0 & 0
\end{bmatrix}.
\end{align*}

There are $p^2 - 1$ nonzero vectors for $\begin{bmatrix} a & b \end{bmatrix}$, and for each such choice, there are $p^2 - p$ linearly independent choices for $\begin{bmatrix} c & d \end{bmatrix}$. Therefore,

$$
n = |U(M_{2}(\mathbb{Z}_p))| = (p^2 - 1)(p^2 - p) = p^4 - p^3 - p^2 + p.
$$

Next, we consider the set $U'(M_{2}(\mathbb{Z}_p))$. Let

$$
\begin{bmatrix}
a & b\\ 
c & d
\end{bmatrix} \in U'(M_{2}(\mathbb{Z}_p)).
$$

Then it satisfies:
\begin{align*}
\begin{bmatrix}
a & b\\
c & d
\end{bmatrix}^2 =
\begin{bmatrix}
1 & 0\\
0 & 1
\end{bmatrix}
&\iff
\begin{bmatrix}
a^2 + bc & ab + bd\\
ac + cd & bc + d^2
\end{bmatrix}=
\begin{bmatrix}
1 & 0\\
0 & 1
\end{bmatrix} \\
&\iff
\begin{cases}
a^2 + bc = 1 \\
ab + bd = 0 \\
ac + cd = 0 \\
bc + d^2 = 1
\end{cases}
\iff
\begin{cases}
a^2 + bc = 1 \\
b(a + d) = 0 \\
c(a + d) = 0 \\
bc + d^2 = 1
\end{cases}
\end{align*}

Since $\mathbb{Z}_p$ is a field, the conditions $b(a+d) = 0$ and $c(a+d) = 0$ yield the following four cases:
\begin{enumerate}
    \item Case $b = 0$ and $c = 0$.
   Then $a^2 = 1$ and $d^2 = 1$, implying $a, d \in U'(\mathbb{Z}_p) = \{1, p-1\}$. Thus,
   $$ \begin{bmatrix}
    a & 0\\ 
    0 & d
    \end{bmatrix}, \text{ where } a, d \in \{1, p-1\}.$$
    
    \item Case $b = 0$ and $a = -d$.
   Then again $a^2 = d^2 = 1$, so $(a, d) = (1, p-1)$ or $(p-1, 1)$. Excluding the case $c = 0$ already considered above, we have:
    $$\begin{bmatrix}
        1 & 0\\ 
        c & p-1
    \end{bmatrix}, 
    \begin{bmatrix}
        p-1 & 0\\ 
        c & 1
    \end{bmatrix}, \text{ where } c \in \mathbb{Z}_p \setminus \{0\}.$$

    \item Case $c = 0$ and $a = -d$.
   Similarly, excluding the case $b = 0$, we obtain:
   $$\begin{bmatrix}
        1 & b\\ 
        0 & p-1
    \end{bmatrix}, 
    \begin{bmatrix}
        p-1 & b\\ 
        0 & 1
    \end{bmatrix}, \text{ where } b \in \mathbb{Z}_p \setminus \{0\}.$$

    \item Case $a = -d$, with $b \ne 0$, $c \ne 0$.
   Then $a^2 = 1 - bc$, so $c = \frac{1 - a^2}{b}$. Since $c \ne 0$, $b \ne 0$, we must have $1 - a^2 \ne 0 \Rightarrow a \notin \{1, p-1\}$. Thus:
    $$\begin{bmatrix}
        a & b\\ 
        \frac{1 - a^2}{b} & -a
    \end{bmatrix}, \text{ where } a \in \mathbb{Z}_p \setminus \{1, p-1\}, \ b \in \mathbb{Z}_p \setminus \{0\}.$$
\end{enumerate}
Combining all cases, the cardinality of $U'(M_{2}(\mathbb{Z}_p))$ is:
\begin{align*}
    t = |U'(M_{2}(\mathbb{Z}_p))| &= 
    \begin{cases}
        4, & \text{if } p = 2, \\
        4 + 2(p - 1) + 2(p - 1) + (p - 2)(p - 1), & \text{if } p > 2
    \end{cases}\\
    &= \begin{cases}
        4, & \text{if } p = 2, \\
        p^2 + p + 2, & \text{if } p > 2.
    \end{cases}
\end{align*}

Therefore, we conclude:

$$
Cl_2(M_{2}(\mathbb{Z}_p)) \cong 
\begin{cases}
Shu^4_6(3K_2), & \text{if } p = 2, \\
Shu^{p^2 + p + 2}_{p^4 - p^3 - p^2 + p}\left(\frac{p(p+1)}{2}K_2 \right), & \text{if } p > 2.
\end{cases}
$$

\end{proof}

\end{document}